\documentclass{amsart}
\usepackage{amssymb,latexsym}
\theoremstyle{plain}
\newtheorem{theorem}{Theorem}
\newtheorem{corollary}{Corollary}
\newtheorem{proposition}{Proposition}
\newtheorem{lemma}{Lemma}
\theoremstyle{definition}

\newtheorem{remark}{Remark}
\newtheorem{question}{Question}
\date{}

\begin{document}

\title[blowing up]
{A splitting criterion for vector bundles on blowing ups of the plane}
\author{E. Ballico}
\address{Dept. of Mathematics\\
  University of Trento\\
38050 Povo (TN), Italy}
\email{ballico@science.unitn.it}
\thanks{The author was partially supported by MIUR and GNSAGA of 
INdAM (Italy).}
\author{F. Malaspina}
\address{Dip. Matematica, Universit\`{a} di Torino\\
via Carlo Alberto 10, 10123 Torino, Italy}
\email{francesco.malaspina@unito.it}
\subjclass{14J60}
\keywords{vector bundle; blowing up of the plane}

\begin{abstract}
Let $f_s: X_s \to {\bf {P}}^2$ be the blowing-up of $s$ distinct 
points and $E$ a vector bundle on $X_s$. Here we give
a cohomological criterio which is equivalent to $E \cong f_s^\ast 
(A)$ with $A$ a direct sum of line bundles.
We also some cohomological characterizations of very particular rank 
$2$ vector bundles
on ${\bf {P}}^2$.
\end{abstract}

\maketitle

\section{Introduction}\label{S1}
Fix an integer $s \ge 1$ and $s$ distinct points $P_1,\dots ,P_s\in 
{\bf{P}}^2$. Let $f_s:
X_s \to {\bf {P}}^2$ denote the blowing up of the points $P_1,\dots 
,P_s$. $\mbox{Pic}(X_s) \cong \mathbb {Z}^{s+1}$ and we will take the 
line
bundles $f_s^\ast (\mathcal {O}_{{\bf {P}}^2}(1))$ and $D_i:= 
f_s^{-1}(P_i)$, $1 \le i \le s$, as free generators
of $\mbox{Pic}(X_s)$. Set $\mathcal {O}_{X_s}(t;a_1,\dots 
,a_s):=f_s^\ast (\mathcal {O}_{{\bf {P}}^2}(t))
(\sum _{i=1}^{s} a_iD_i)$. Hence $\mathcal {O}_{X_s}(t;a_1,\dots 
,a_s) \cdot \mathcal {O}_{X_s}(z;b_1,\dots ,b_s)
= tz - \sum _{i=1}^{s} a_ib_i$. For any coherent sheaf $A$ on $X_s$
set $A(t;a_1,\dots ,a_s):= A\otimes \mathcal {O}_{X_s}(t;a_1,\dots 
,a_s)$. Set $X_0:= {\bf {P}}^2$ and $f_0:= \mbox{Id}_{{\bf {P}}^2}$.
In section \ref{S2} we will prove the following result.

\begin{theorem}\label{a3}
Let $E$ be a rank $r$ torsion free sheaf on $X_s$ which is locally 
free at each point
of $D_1\cup \cdots \cup D_s$. For every $i\in \{1,\dots ,s\}$ let 
$b_{i, 1} \ge \cdots \ge b_{i,r}$
denote the splitting type of $E\vert D_i$. The following conditions 
are equivalent:
\begin{itemize}
\item[(a)] $H^1(X_s,E(t;0,\dots ,0)) =0$ for all $t\in \mathbb {Z}$.
\item[(b)] $b_{i,r} \ge 0$ for all $i \in \{1,\dots ,s\}$ and $f_{s 
\ast }(E)$ is isomorphic to a direct sum of line bundles.
\item[(c)] There is a direct sum $A$ of $r$ line bundles on ${\bf 
{P}}^2$ such that $E \cong f_s^\ast (A)$.
\end{itemize}
\end{theorem}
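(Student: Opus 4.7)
The plan is to establish the cycle (c)$\Rightarrow$(a), (c)$\Rightarrow$(b), (a)$\Rightarrow$(b), and (b)$\Rightarrow$(c). Since $f_s$ is the blow-up of a smooth surface at finitely many smooth points, $Rf_{s*}\mathcal{O}_{X_s}=\mathcal{O}_{\mathbf{P}^2}$ and $R^q f_{s*}\mathcal{G}=0$ for any coherent $\mathcal{G}$ when $q\geq 2$. Assuming (c), the projection formula at once yields $H^i(X_s,f_s^*A\otimes f_s^*\mathcal{O}(t))=H^i(\mathbf{P}^2,A(t))$, which vanishes for $i=1$ because $A$ is split; and $f_s^*A|_{D_i}\cong\mathcal{O}_{D_i}^{\oplus r}$ because $f_s$ contracts $D_i$ to a point, so every $b_{i,j}=0$, while $f_{s*}f_s^*A=A$ is split. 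Thus (c) implies both (a) and (b).

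For (a)$\Rightarrow$(b) I apply the Leray spectral sequence to $\mathcal{F}=E(t;0,\dots,0)$. By the projection formula $R^q f_{s*}\mathcal{F}=(R^q f_{s*}E)(t)$, and the five-term exact sequence reads
\[
0\to H^1(\mathbf{P}^2,(f_{s*}E)(t))\to H^1(X_s,\mathcal{F})\to H^0(\mathbf{P}^2,(R^1 f_{s*}E)(t))\to H^2(\mathbf{P}^2,(f_{s*}E)(t)).
\]
Hypothesis (a) annihilates the middle term, so $H^1(\mathbf{P}^2,(f_{s*}E)(t))=0$ for every $t$. The sheaf $R^1 f_{s*}E$ is supported on the finite set $\{P_1,\dots,P_s\}$, so its space of global sections is independent of $t$; Serre vanishing kills the rightmost term for $t\gg 0$, forcing $R^1 f_{s*}E=0$. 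A local analysis near each $P_i$, using that $E$ is locally free on a neighborhood of $D_i$, shows $f_{s*}E$ is locally free on $\mathbf{P}^2$, and Horrocks' splitting criterion then produces the desired direct sum decomposition. The nonnegativity $b_{i,r}\geq 0$ is obtained by combining $R^1 f_{s*}E=0$ with the theorem on formal functions (together with a Mittag--Leffler argument on the system of thickenings $D_i^{(n)}$) and the split form of $f_{s*}E$.

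For (b)$\Rightarrow$(c) set $A=f_{s*}E$. Adjunction gives $\phi\colon f_s^*A\to E$, which is an isomorphism over $X_s\setminus\bigcup_i D_i$ and therefore injective, with cokernel $Q$ supported on $\bigcup D_i$. As $E$ and $f_s^*A$ are both locally free of rank $r$ near the exceptional divisors, $Q$ is either zero or pure of codimension one, so $c_1(Q)=\sum_i n_i D_i$ with $n_i\geq 0$. Comparing determinants in $0\to f_s^*A\to E\to Q\to 0$ and intersecting with $D_j$ (using $c_1(f_s^*A)\cdot D_j=0$, since $D_j$ is contracted) yields
\[
\sum_{k=1}^r b_{j,k}=c_1(E)\cdot D_j=c_1(f_s^*A)\cdot D_j+\sum_i n_i(D_i\cdot D_j)=-n_j.
\]
The hypothesis $b_{j,r}\geq 0$ forces every $b_{j,k}\geq 0$, making the left-hand side nonnegative and the right-hand side nonpositive; hence $n_j=0$ and every $b_{j,k}=0$. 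Consequently $c_1(Q)=0$, purity forces $Q=0$, and $\phi$ is an isomorphism, giving $E\cong f_s^*A$. The main obstacle I anticipate is sharpening the bound $b_{i,r}\geq -1$ (the direct output of $R^1 f_{s*}E=0$ and the theorem on formal functions) to the required $b_{i,r}\geq 0$ in the step (a)$\Rightarrow$(b); this upgrade will need a more delicate use of the split form of $f_{s*}E$ together with the cohomology of the higher infinitesimal neighborhoods of each $D_i$.
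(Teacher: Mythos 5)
Your Leray five-term reduction of (a) to the two statements $R^1f_{s\ast}(E)=0$ and $H^1({\bf {P}}^2,f_{s\ast}(E)(t))=0$ for all $t$ is exactly the paper's Remark \ref{b3}, and your determinant/intersection-number argument for (b)$\Rightarrow$(c) is a clean alternative to the paper's route via Lemma \ref{b2} (which instead bounds the fibre dimension of $f_{s\ast}(E)$ at $P_i$ below by $h^0(D_i,E\vert D_i)$ using the theorem on formal functions). But two things in your (a)$\Rightarrow$(b) do not work as written. First, ``a local analysis near each $P_i$ shows $f_{s\ast}E$ is locally free'' is not a local statement about $E$: the pushforward of a bundle that is locally free near the exceptional curve need not be locally free (e.g.\ $\pi _\ast (\mathcal {O}_V(-Y))=\mathcal {I}_P$, the ideal of the point). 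The correct order is the paper's: $f_{s\ast}(E)$ is a torsion free ACM sheaf on ${\bf {P}}^2$, hence already a direct sum of line bundles (double-dual argument as in Remark \ref{c0} plus Horrocks), and its local freeness is a consequence of, not a prerequisite for, the splitting.

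Second, and decisively, the gap you flag at the end --- upgrading $b_{i,r}\ge -1$ to $b_{i,r}\ge 0$ --- cannot be closed, because that implication is false. Take $s=1$ and $E=\mathcal {O}_{X_1}\oplus \mathcal {O}_{X_1}(0;1)=\mathcal {O}_{X_1}\oplus \mathcal {O}_{X_1}(D_1)$. One checks $f_{1\ast}\mathcal {O}_{X_1}(D_1)=\mathcal {O}_{{\bf {P}}^2}$ and $R^1f_{1\ast}\mathcal {O}_{X_1}(D_1)=0$ (its splitting type on $D_1$ is $-1$, which is $\ge -1$), so the projection formula and Leray give $H^1(X_1,E(t;0))=H^1({\bf {P}}^2,\mathcal {O}_{{\bf {P}}^2}(t)^{\oplus 2})=0$ for every $t$: condition (a) holds, and $f_{1\ast}(E)\cong \mathcal {O}_{{\bf {P}}^2}^{\oplus 2}$ is even split. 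Yet $E\vert D_1\cong \mathcal {O}\oplus \mathcal {O}(-1)$, so $b_{1,2}=-1$, and $E$ is not a pullback since $c_1(E)\cdot D_1=-1\ne 0$. Hence no ``more delicate use of the split form of $f_{s\ast}E$ and the thickenings $D_i^{(n)}$'' will produce $b_{i,r}\ge 0$; the implication (a)$\Rightarrow$(b) fails as stated. (The paper's own proof has the same fault line: Remark \ref{b3} cites Remark \ref{a2} for ``$R^1f_{s\ast}(E)=0\Rightarrow b_{i,r}\ge 0$,'' but Remark \ref{a2} proves only the equivalence with $b_{i,r}\ge -1$, and its auxiliary claim that $b_r<0$ forces $\pi_\ast(F)$ non--locally free fails for $F=\mathcal {O}_V(Y)$.) So you have correctly isolated the one step that is genuinely problematic; a repaired theorem must either strengthen hypothesis (a) or enlarge the target class in (c) to allow twists by $\mathcal {O}_{X_s}(0;a_1,\dots ,a_s)$ with $a_i\in \{0,1\}$.
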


\begin{remark}\label{a3.0}
We will also check that every torsion free sheaf $E$ on $X_s$ such
that $H^1(X_s,E(t;0,\dots ,0)) =0$ for all $t\ll 0$ is locally free 
(see Remark \ref{c0}). Hence we may apply Theorem \ref{a3}
to this sheaf, without imposing that $E$ is locally free at each 
point of $D_1\cup \cdots \cup D_s$.
\end{remark}

It is essential that we use all multiples (positive and negative) of 
a `` minimal '' line bundle
$\mathcal {O}_{X_s}(1;0,\dots ,0)$. In section \ref{S3} we will see 
in the rank $2$ case and for the plane $X_0$
what happens if we take e.g. only twists by even degree line bundles 
(see Propositions \ref{c1} and \ref{c2.1}
and Remark \ref{c2.2}).

We work over an algebraically closed field $\mathbb {K}$.

\section{Proof of Theorem \ref{a3}}\label{S2}

\begin{remark}\label{c0}
Let $X$ be a smooth and connected projective surface and $R\in 
\mbox{Pic}(X)$ such that $\vert R\vert$
contains the sum of an effective divisor and of an ample divisor. Let 
$E$ be a torsion free
sheaf on $X$ such that $h^1(X,E\otimes R^{\otimes t})=0$
for infinitely many negative integers $t$. Consider the exact sequence
\begin{equation*}
0 \to E \to E^{\ast \ast} \to E^{\ast \ast }/E \to 0
\end{equation*}
Since $h^0(X,E^{\ast \ast }\otimes R^{\otimes t})=0$ for $t \ll 0$, 
$E$ is locally free.
\end{remark}

\begin{remark}\label{a1}
Notice that $h^i(X_s,\mathcal {O}_{X_s}(t;0,\dots ,0)) = h^i({\bf 
{P}}^2,\mathcal {O}_{{\bf {P}}^2}(t))$
for all $i=0,1,2$, and all $t\in \mathbb {Z}$. Hence 
$h^1(X,s,\mathcal {O}_{X_s}(t;0,\dots ,0)) =0$ for all $t\in \mathbb 
{Z}$.
When $t \ge 0$ we have $h^0(X_s,\mathcal {O}_{X_s}(t;a_1,\dots ,a_s)) 
= h^0({\bf {P}}^2, \mathcal {O}_{{\bf {P}}^2}(t))$ if and only if
$a_i \ge 0$ for all $i$.
\end{remark}

\begin{remark}\label{a2}
Let $U$ be a smooth and connected quasi-projective surface. Fix $P\in 
U$. Let $\pi : V \to U$ denote the blowing up of $P$.
Set $Y:= \pi ^{-1}(P)$. Let $\mathcal {I}$ denote the ideal sheaf of 
$Y$ in $V$. Hence $Y \cong {\bf {P}}^1$
and $\mathcal {I}/\mathcal {I}^2$ is (as an $\mathcal {O}_Y$-sheaf) 
isomorphic to the degree $1$ line bundle
of $Y$. For every integer $n\ge 0$ let $Y^{(n)}$ denote the $n$-th 
infinitesimal neighborhood of $Y$ in $V$,
i.e. the closed subscheme of $V$ with $\mathcal {I}^n$ as its indeal 
sheaf. Hence $Y^{(0)} = 0$. Let $\widehat Y$
denote the formal completion of $Y$ in $V$, i.e. the formal scheme 
$\projlim _n Y^{(n)}$. Fix
any rank $r$ vector bundle $E$ on $\widehat Y$. Let $b_1 \ge \cdots 
\ge b_r$ be the splitting type of $E\vert Y$.
For every
integer $n \ge 0$ we have an exact sequence
\begin{equation}\label{eqb1}
0 \to (\mathcal {I}/\mathcal {I}^2)^{\otimes n}\otimes E\vert Y \to 
E\vert Y^{(n)} \to E\vert Y^{(n-1)} \to 0
\end{equation}
Since $\dim (\widehat Y)=1$, we get that for every integer $n \ge 1$ 
the restriction map $H^1(Y^{(n)},E\vert
Y^{(n)}) \to H^1(Y^{(n)},E\vert
Y^{(n-1)})$ is surjective. Hence the restriction map $H^1(\widehat 
Y,E) \to H^1(Y,E\vert Y)$ is surjective. Since
$h^1(Y,E\vert Y) = 0$ if and only if $b_r \ge -1$, we get 
$H^1(\widehat Y,E) \ne 0$ if $b_r \le -2$.
Now assume $b_r \ge -1$. Since $(\mathcal {I}/\mathcal 
{I}^2)^{\otimes n}$ is a degree $n$ line bundle
on $Y \cong {\bf {P}}^1$, the tower of exact sequences (\ref{eqb1}) 
gives $h^1(Y^{(n)},E\vert Y^{(n)}) = 0$
Hence $H^1(\widehat Y,E)=0$ if and only if $b_r \ge -1$. Now assume 
that $E$ is the restriction to $\widehat {Y}$
of a vector bundle $F$ on $V$. The formal function theorem 
(\cite{h1}, III.11.1) gives
that $R^1\pi _\ast (F)=0$ if and only if $b_r \ge -1$. Since every 
fiber of the proper map $\pi$ has
dimension at most $1$, $R^j\pi _\ast (F)=0$ for all $j \ge 2$ (e.g. 
by the formulal function theorem (\cite{h1}, III.11.1,
and Nakayama's lemma). For any splitting type $b_1 \ge \cdots \ge b_r$
the sheaf $\pi _\ast (F)$ is torsion free and its restriction to 
$U\backslash \{P\}$ is locally free.
The natural map $\tau _F: \pi ^\ast \pi _\ast (F) \to F$ is an 
isomorphism outside $Y$.
It easy to check that if $b_r < 0$, then
$\pi _\ast (F)$ is not locally free. We have $b_1 = \cdots = b_r =0$ 
if and only if the
natural map $\tau _F: \pi ^\ast \pi _\ast (F) \to F$ is an isomorphism.
\end{remark}

\begin{lemma}\label{b2}
Take the set-up of Remark \ref{a2}. Let $F$ be a rank $r$ vector 
bundle on $V$ and let $b_1 \ge \cdots \ge b_r$ be the splitting
type of $F\vert Y$. Assume $b _r \ge 0$. $\pi _\ast (F)$ is locally 
free if and only if $b_1 = \cdots b_r = 0$.
\end{lemma}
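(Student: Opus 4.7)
\smallskip\noindent\textbf{Proof plan.} The plan is to work with the completion $\widehat{(\pi_\ast F)}_P$ of the stalk at $P$ and to invoke the formal function theorem, which identifies it with $\varprojlim_n H^0(Y^{(n)},F|Y^{(n)})$. Since $b_r\ge 0$, the exact sequences of Remark \ref{a2} combined with the vanishing $H^1({\bf P}^1,\mathcal O(m))=0$ for $m\ge -1$ show that every restriction map $H^0(Y^{(n+1)},F|Y^{(n+1)})\to H^0(Y^{(n)},F|Y^{(n)})$ is surjective; in particular the induced map $\widehat{(\pi_\ast F)}_P\twoheadrightarrow H^0(Y,F|Y)$ is surjective.

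For the implication ``$b_1=\cdots=b_r=0\Rightarrow\pi_\ast F$ locally free'', I would lift the trivialization $F|Y\cong\mathcal O_Y^r$ step by step to the formal neighborhood: the obstruction to lifting sections from $Y^{(n)}$ to $Y^{(n+1)}$ lies in $H^1(Y,(\mathcal I/\mathcal I^2)^{\otimes n}\otimes F|Y)=H^1({\bf P}^1,\mathcal O(n))^r$, which vanishes for every $n\ge 0$. This yields $F|\widehat Y\cong\mathcal O_{\widehat Y}^r$, and hence
$$\widehat{(\pi_\ast F)}_P\cong H^0(\widehat Y,\mathcal O_{\widehat Y})^r\cong\widehat{\mathcal O}_{U,P}^r.$$
Since $(\pi_\ast F)_P$ is finitely generated and its completion is free of rank $r$, faithful flatness of $\widehat{\mathcal O}_{U,P}$ over $\mathcal O_{U,P}$ forces $(\pi_\ast F)_P$ itself to be free of rank $r$, so $\pi_\ast F$ is locally free near $P$.

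For the converse, suppose $(\pi_\ast F)_P$ is free of rank $r$; then so is $\widehat{(\pi_\ast F)}_P$ over $\widehat{\mathcal O}_{U,P}$. The key observation is that $\mathfrak m_P\cdot\mathcal O_V=\mathcal I$ annihilates $\mathcal O_Y$, so $\mathfrak m_P$ acts as zero on $H^0(Y,F|Y)$; the surjection from the first paragraph therefore factors through the residue fiber $\widehat{(\pi_\ast F)}_P\otimes_{\widehat{\mathcal O}_{U,P}}k\cong k^r$, giving $\dim_k H^0(Y,F|Y)\le r$. Since every $b_i\ge 0$,
$$\dim_k H^0(Y,F|Y)=\sum_{i=1}^r(b_i+1)=r+\sum_{i=1}^r b_i,$$
so the inequality forces $\sum b_i=0$ and thus $b_i=0$ for every $i$. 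I do not anticipate a serious obstacle: both directions rest on the formal function theorem together with the $H^1$-vanishings already collected in Remark \ref{a2}, the only subtlety being the observation that $\mathfrak m_P$ acts trivially on $H^0(Y,F|Y)$, which drives the dimension comparison in the second direction.
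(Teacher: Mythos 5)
Your proof is correct and follows essentially the same route as the paper: the ``only if'' direction is the paper's argument verbatim in spirit (the $H^1$-vanishings from Remark \ref{a2} give surjective restriction maps, and the formal function theorem then bounds the fiber of $\pi_\ast(F)$ at $P$ below by $h^0(Y,F\vert Y)=r+\sum b_i$). For the ``if'' direction the paper merely points back to the assertion at the end of Remark \ref{a2}; your completion-plus-faithfully-flat-descent argument supplies the details the paper leaves implicit, but it is the same circle of ideas rather than a genuinely different method.
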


\begin{proof}
We claimed the `` if '' part at the end of Remark \ref{a2}. Assume 
$b_1>0$. Since $b_r \ge 0$, $h^1(Y,(F\vert Y)\otimes
(\mathcal {I}/\mathcal {I}^2)^{\otimes n}) = 0$ for all $n \ge 0$. 
Hence (\ref{eqb1}) implies that
the restriction map $ H^0(Y^{(n)},F\vert Y^{(n)}) \to 
H^0(Y^{(n-1)},F\vert Y^{(n-1)})$ is surjective
for every $n>0$. The formal function theorem (\cite{h1}, III.11.1) 
implies that the fiber
of $\pi _\ast (F)$ over $P$ has dimension at least $h^0(Y,F\vert Y) = 
r+b_1+\cdots +b_r>r$. Since $\pi _\ast (F)$
has rank $r$, it is not locally free.
\end{proof}

\begin{remark}\label{b3}
Let $F$ be any vector bundle on $U$. Since $\dim (U)=\dim(V)=2$, 
$H^i(U,\mathcal {F}) =0$
(resp. $H^i(V,\mathcal {F})=0$) for all integers $i \ge 3$ and all 
coherent sheaves $\mathcal {F}$
on $U$ (resp. $V$). Since each fiber of the proper map
$\pi$ has dimension at most $1$, $R^j\pi _\ast (F)= 0$ for
all $j \ge 2$. Hence the Leray spectral sequence of $\pi$ gives an 
exact sequence
\begin{equation}\label{eqb4}
0 \to H^1(U,\pi _\ast (F)) \to H^1(V,F) \to H^0(U,R^1\pi _\ast (F)) 
\to H^2(U,\pi _\ast (F))
\end{equation}
(\cite{b}, p. 31). Hence if $H^1(V,F) = 0$ and $H^2(U,\pi _\ast 
(F))=0$, then $H^0(U,R^1\pi _\ast (F)) = 0$. Since $R^1\pi _\ast (F)$ 
is supported by $P$,
$H^0(U,R^1\pi _\ast (F)) = 0$ if and only if $R^1\pi _\ast (F) = 0$. 
The same relation is true if we blow up more than one point.
Hence we get the following observation. Let $A$ be a rank $r$ vector 
bundle on $X_s$. Assume $h^1(X_s,A(t;0,\dots ,0))$
for all $t \gg 0$. Notice that for fixed $A$ we have $H^1({\bf 
{P}}^2,f_{s \ast }(A)(t))=H^2({\bf {P}}^2,f_{s \ast } (A)(t))=0$ for 
$t \gg 0$, while
the integer $h^0({\bf {P}}^2,R^1f_{s _\ast }(A)(t))$ does not depend 
from $t$, because the sheaf $R^1f _{s \ast } (A)$
is supported by the finite set $\{P_1,\dots ,P_s\}$. We get $R^1f _{s 
\ast }(A)=0$.
Let
$b_{i,1} \ge \cdots \ge b_{i,r}$ be the splitting type of $A\vert 
D_i$. Since $R^1f_{s \ast }(A)=0$,
$b_{i,r} \ge 0$ for all $i$ (Remark \ref{a2}).
\end{remark}

\qquad {\emph {Proofs of Theorem \ref{a3} and of Remark 
\ref{a3.0}.}}.Obviously, (c) implies (a) (Remark
\ref{a1}). The projection formula gives that (c) implies (b).
Assume that (a) holds, but only assuming that $E$ is torsion free. The
line bundle $\mathcal {O}_{X_s}(2s;-1,\dots ,-1)$ is ample. Hence 
$\mathcal {O}_{X_s}(2s;0,\dots ,0)$
is the tensor product of an ample line bundle and of a line bundle 
with a non-zero section. Remark
\ref{c0} gives that $E$ is locally free. Remark \ref{b3} gives 
$R^1f_{s \ast }(E)=0$ and $b_{i,r} \ge 0$ for
all $i$. Fix a line $D\subset {\bf {P}}^2$ such that $\{P_1,\dots 
,P_s\}\cap D = \emptyset$.
Hence $D \cong f^{-1}(D)$ of $X_s$.
Let $t_1 \ge \cdots \ge t_r$ be the splitting type of $E\vert f^{-1}(D)$. Let
$\epsilon :H^0(X_s,E(-t_1;0,\dots ,0))\otimes \mathcal {O}_{X_s} \to 
E(-t_1;0,\dots ,0)$
denote the evaluation map.
Let $b$ the maximal integer such that
$1 \le b \le r$ and $t_b=t_1$. Hence $h^0(f^{-1}(D),E(-t_1)\vert 
f^{-1}(D)) = b$. Since $f^{-1}(D)\in \vert
\mathcal {O}_{X_s}(1;0,\dots ,0)\vert$, we have an exact sequence
\begin{equation}\label{eqa2}
0 \to E(-t-1;0,\dots ,0) \to E(-t;0,\dots ,0) \to E(-t;0,\dots 
,0)\vert f^{-1}(D) \to 0
\end{equation}
Look at the cases $t=t_1$ and $t=t_1+1$ of (\ref{eqa2}). Our 
cohomological assumption on $E$ gives $h^0(X_s,E(-t_1;0,\dots ,0) = b$
and that $\epsilon$ has rank $b$ at each point of $f^{-1}(D)$. Hence 
$\mbox{Im}(\epsilon )$ is a rank $b$
torsion free sheaf spanned by a $b$-dimensional linear space of 
global sections. Hence $\mbox{Im}(\epsilon )
\cong \mathcal {O}_{X_s}^{\oplus b}$. We also get that the restriction of the
inclusion $u: \mbox{Im}(\epsilon ) \to E(-t_1;0,\dots ,0)$ to the 
fiber over every $P\in X_s\backslash D_1\cup \cdots \cup
D_s$ has rank $b$. If $s=0$ we also get
that $\mathcal {O}_{{\bf {P}}^2}(t_1)^{\oplus b}$ is a subbundle $E'$ 
of $E$ such that $E/E'\vert D$ has splitting
type $t_{b+1}\ge \cdots \ge t_r$. After finitely many steps we get 
(unfortunately, with the classical proof)
the case $s=0$, i.e. that an ACM torsion free sheaf on ${\bf {P}}^2$ 
is a direct sum of line bundles. Hence we may assume
$s>0$. Since $R^1f_{s \ast }(E)=0$, the projection formula and the 
Leray spectral sequence
of $f_s$ give
$h^1({\bf {P}}^2,f_{s \ast }(E)(t)) =0$ for all $t\in \mathbb {Z}$. Hence
the torsion free sheaf $f_{s \ast }(E)$ is ACM. We saw in the proof 
of the case $s=0$
that $f_{s \ast }(E)$ is a direct sum of line bundles and in 
particular it is locally free. Thus $b_{i,j} =0$
for all $i\in \{1,\dots ,s\}$ and $j\in \{1,\dots ,r\}$
(Lemma \ref{b2}). Since $E$ is locally free, these equalities are
equivalent to the existence of a vector bundle $A$ on ${\bf {P}}^2$ 
such that $E \cong f_s^\ast (A)$.
Since $A \cong f_{s \ast }(f_s^\ast (A))$, we get that (a) implies 
(c) and (b). Assume (b). Lemma \ref{b2}
gives $b_{i,j} =0$
for all $i\in \{1,\dots ,s\}$ and $j\in \{1,\dots ,r\}$. Hence $E 
\cong f_s^\ast (f_{s \ast }(E))$. Thus
(b) implies (c).\\
Alternatively, one can use descent to get $E \cong f_s^\ast (M)$ for 
some torsion free sheaf $M$ on ${\bf {P}}^2$,
then show that $M$ is locally free using Remark \ref{c0} and then 
show that $M$ is isomorphic to a direct sum of line
bundles.\qed

\section{Rank $2$ vector bundles on ${\bf {P}}^2$}\label{S3}

\begin{proposition}\label{c1}
Let $E$ be a rank $2$ torsion free sheaf on ${\bf {P}}^2$ such that 
$h^1({\bf {P}}^2,E(t))=0$ for every even integer
$t$. Then either $E \cong \Omega _{{\bf {P}}^2}(z)$ for some odd integer $z$
or $E$ is a direct sum of two line bundles.
\end{proposition}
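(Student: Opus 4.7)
\medskip
\noindent
\emph{Proof plan.}
The plan is to study the graded $S$-module $M := \bigoplus_{t \in \mathbb{Z}} H^1(\mathbf{P}^2, E(t))$, where $S = \mathbb{K}[x_0,x_1,x_2]$, and exploit Horrocks-type results for vector bundles on $\mathbf{P}^2$.

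First, apply Remark \ref{c0} with $R = \mathcal{O}_{\mathbf{P}^2}(2)$: the linear system $|R|$ consists of plane conics, each of which decomposes as a line plus a line (effective plus ample), and by hypothesis $h^1(E \otimes R^{\otimes k}) = 0$ for infinitely many $k < 0$; hence $E$ is locally free. Since $E$ is locally free on $\mathbf{P}^2$, the module $M$ has finite length. The hypothesis forces $M_t = 0$ for every even $t$; as multiplication by any $x \in S_1$ sends $M_t$ (odd) into $M_{t+1}$ (even, hence zero), the maximal ideal $\mathfrak{m} = (x_0,x_1,x_2)$ annihilates $M$. Therefore $M \cong \bigoplus_j \mathbb{K}(-d_j)$ for a finite multiset of odd integers $d_j$. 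If $M = 0$, Horrocks' splitting criterion on $\mathbf{P}^2$ gives $E \cong \mathcal{O}(a) \oplus \mathcal{O}(b)$, as required.

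Assume now $M \ne 0$. Form the universal extension
\begin{equation*}
0 \to E \to F \to \bigoplus_j \mathcal{O}_{\mathbf{P}^2}(-d_j) \to 0.
\end{equation*}
A cohomology long exact sequence computation using $\mathfrak{m} \cdot M = 0$ shows that $H^1_\ast(F) = 0$, so Horrocks gives $F$ as a direct sum of line bundles. The analogous sequence obtained by taking the direct sum of the twisted Euler sequences $0 \to \Omega_{\mathbf{P}^2}(-d_j) \to \mathcal{O}(-d_j-1)^3 \to \mathcal{O}(-d_j) \to 0$ realizes the same module $M$ as first cohomology of $\bigoplus_j \Omega(-d_j)$. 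Invoking the Horrocks-type classification of vector bundles on $\mathbf{P}^2$ modulo direct summands of line bundles, $E$ and $\bigoplus_j \Omega(-d_j)$ are stably equivalent: $E \oplus L_1 \cong \bigoplus_j \Omega(-d_j) \oplus L_2$ for some direct sums of line bundles $L_1, L_2$. Each $\Omega(-d_j)$ is indecomposable of rank $2$ and cannot be absorbed into a line-bundle summand, so by Krull-Schmidt each $\Omega(-d_j)$ must be a direct summand of $E$. Since $E$ has rank $2$, this can happen for at most one index $j$, forcing $M = \mathbb{K}(-d)$ and $E \cong \Omega(-d) = \Omega(z)$ with $z = -d$ odd.

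The main obstacle is the identification in the case $M \ne 0$. When $\dim_\mathbb{K} M = 1$ one can bypass the Horrocks classification by direct analysis of the universal extension: after the even twist normalizing $M$ to $\mathbb{K}(1)$, one has $0 \to E \to F \to \mathcal{O}(1) \to 0$ with $F = \bigoplus_{i=1}^3 \mathcal{O}(a_i)$ and $\sum_i a_i = 0$; an elementary argument rules out $a_i \ge 1$ (in each such case $E$ acquires a line-bundle direct summand, hence splits, contradicting $M \ne 0$), forcing all $a_i = 0$, and then the surjection $\mathcal{O}^3 \to \mathcal{O}(1)$ is, up to change of coordinates, the Euler map, giving $E \cong \Omega(1)$. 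When $\dim_\mathbb{K} M \ge 2$, the Horrocks-Krull-Schmidt argument seems indispensable: any bundle with $H^1_\ast = M$ has $\bigoplus_j \Omega(-d_j)$ of rank $2\dim_\mathbb{K} M$ as an indecomposable rank-$2$ summand-set, contradicting rank $E = 2$ unless $\dim_\mathbb{K} M = 1$.
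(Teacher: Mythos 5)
Your proof is correct, but it takes a genuinely different route from the paper's. You translate the hypothesis into the statement that the finite-length graded module $M=H^1_*(E)$ is killed by the irrelevant ideal (multiplication by a linear form sends each odd-degree piece into an even-degree piece, which vanishes by hypothesis), so $M\cong\bigoplus_j\mathbb{K}(-d_j)$ with all $d_j$ odd, and you then invoke the Horrocks correspondence on $\mathbf{P}^2$ --- a vector bundle is determined up to adding direct sums of line bundles by the graded module $H^1_*$ --- together with Atiyah's Krull--Schmidt theorem to force either $M=0$ (whence $E$ splits) or $E\cong\Omega_{\mathbf{P}^2}(-d)$ with $d$ odd. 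The paper argues classically instead: it takes a nonzero section of the minimal twist $E(w)$, obtains the sequence $0\to\mathcal{O}_{\mathbf{P}^2}(t)\to E(w+t)\to\mathcal{I}_Z(c_1+2w+t)\to0$, and plays the vanishing hypothesis at $t=-2$ (for $w$ even) and $t=-3$ (for $w$ odd) against the length bound $z\ge(c_1+2w+1)(c_1+2w)/2$ coming from $h^0(\mathcal{I}_Z(c_1+2w-1))=0$, finishing with the characterization of $\Omega_{\mathbf{P}^2}(2)$ as the unique stable rank $2$ bundle with $c_1=c_2=1$. Your argument is shorter and more conceptual, and it generalizes at once to arbitrary rank (any bundle on $\mathbf{P}^2$ with $h^1$ vanishing in all even twists is a direct sum of line bundles and odd twists of $\Omega_{\mathbf{P}^2}$); its cost is reliance on the nontrivial Horrocks classification, whereas the paper's computation is elementary and self-contained. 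One small caveat: in your closing aside the assertion $\sum_i a_i=0$ for the split middle term of the universal extension amounts to assuming $c_1(E)=-1$, which is not justified at that point; this does not affect the proof, since the Horrocks--Krull--Schmidt argument already covers the case $\dim_{\mathbb{K}}M=1$.
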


\begin{proof}
The Euler's sequence gives $h^1({\bf {P}}^2,\Omega _{{\bf {P}}^2}(t)) 
= 0$ for $t \ne 0$ and
$h^1({\bf {P}}^2,\Omega _{{\bf {P}}^2})=1$. Hence the `` if '' part 
is obvious. Now assume $h^1({\bf {P}}^2,E(t))=0$ for every even 
integer
$t$. Remark \ref{c0} gives that $E$ is locally free. Let $w$ be the 
first integer $t$ such that $h^0({\bf {P}}^2,E(t))\ne 0$. Fix any
$\sigma \in H^0({\bf {P}}^2,E(w))\backslash \{0\}$. The minimality of 
$w$ shows that $\sigma$ induces an exact sequence
\begin{equation}\label{eqc1}
0 \to \mathcal {O}_{{\bf {P}}^2}(t) \to E(w+t) \to \mathcal 
{I}_Z(c_1+2w+t) \to 0
\end{equation}
where $c_1:= c_1(E)$ and $Z$ is a zero-dimensional subscheme of ${\bf 
{P}}^2$. The minimaly of $w$ is equivalent to
$h^0({\bf {P}}^2,\mathcal {I}_Z(c_1+2w-1))=0$. Hence either $c_1+2w-1 
< 0$ or $z:= \mbox{length}(Z)
\ge (c_1+2w+1)(c_1+2w)/2$. First assume $w$ even. Taking $t=-2$ in 
(\ref{eqc1}) and use $h^2({\bf {P}}^2,\mathcal {O}_{{\bf {P}}^2}
(-2)) = 0$ we get
$h^1({\bf {P}}^2,\mathcal {I}_Z(c_1+2w-2))=0$. $E$ splits if and only if $z=0$.
Assume $z>0$. If $c_1+2w-1 < 0$, then $h^1({\bf {P}}^2,\mathcal 
{I}_Z(c_1+2w-2)) = z>0$. If $c_1+2w-1 \ge 0$
we get $h^1({\bf {P}}^2,\mathcal {I}_Z(c_1+2w-2)) \ge 
(c_1+2w+1)(c_1+2w)/2 - (c_1+2w)(c_1+2w-1)/2 = c_1+2w>0$,
contradiction. Now assume $w$ odd. Take $t=-3$ in (\ref{eqc1}). Since 
$h^2({\bf {P}}^2,\mathcal {O}_{{\bf {P}}^2}
(-3)) = 1$, we get that either $z=1$ or $c_1+2w =1$. If $c_1+2w=1$, 
then the minimality of $w$
and (\ref{eqc1}) gives $z=1$. Since $z \ge (c_1+2w+1)(c_1+2w)/2$, in 
both cases we get
$c_1+2w=1$. If $z=1$ and $c_1+2w=0$, then (\ref{eqc1}) gives that 
$E(w)$ is a stable rank two reflexive sheaf
with $c_1(E(w)) = 1$ and $c_2(E(w)) = 1$. It is well-known and easy 
to check that $\Omega _{{\bf {P}}^2}(2)$
is the only such vector bundle.
\end{proof}

Proposition \ref{c1} immediately implies the following result, which 
also follows from Beilinson spectral sequence.

\begin{corollary}\label{c2}
Fix an integer $m$.
Let $E$ be a rank $2$ torsion free sheaf on ${\bf {P}}^2$ such that 
$h^1({\bf {P}}^2,E(t))=0$
for all $t\in \mathbb {Z}\backslash \{m\}$. Then either $E \cong 
\Omega _{{\bf {P}}^2}(-m)$ or $E$ is a direct sum of two line bundles.
\end{corollary}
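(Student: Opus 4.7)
The plan is to reduce Corollary \ref{c2} to Proposition \ref{c1} by a parity argument: we choose a suitable integer $k$ so that $E(k)$ satisfies the hypothesis of Proposition \ref{c1}, and then match the unique exceptional twist $m$ with the Euler-sequence computation of $h^1(\Omega_{\mathbf{P}^2}(z+t))$.

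First I would split into two cases based on the parity of $m$. If $m$ is odd, then every even integer $t$ satisfies $t \neq m$, so $h^1(\mathbf{P}^2, E(t)) = 0$ for all even $t$ and Proposition \ref{c1} applies directly to $E$. If $m$ is even, then $t+1 \neq m$ for every even $t$, so the sheaf $E' := E(1)$ satisfies $h^1(\mathbf{P}^2, E'(t)) = h^1(\mathbf{P}^2, E(t+1)) = 0$ for every even $t$, and Proposition \ref{c1} applies to $E'$.

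In either case, the conclusion of Proposition \ref{c1} presents a dichotomy: either the relevant sheaf ($E$ or $E(1)$) is a direct sum of two line bundles, or it is isomorphic to $\Omega_{\mathbf{P}^2}(z)$ for some odd $z$. In the ``direct sum'' branch there is nothing more to prove, since tensoring by $\mathcal{O}_{\mathbf{P}^2}(-1)$ preserves the property of being a direct sum of line bundles. In the ``cotangent'' branch I would recall from the Euler sequence that $h^1(\mathbf{P}^2, \Omega_{\mathbf{P}^2}(t)) = 0$ for $t \neq 0$ and equals $1$ for $t = 0$; hence the unique twist where $h^1$ is nonzero pins down the value of $z$ uniquely. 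Concretely, when $m$ is odd we need $-z = m$, giving $E \cong \Omega_{\mathbf{P}^2}(-m)$; when $m$ is even and $E(1) \cong \Omega_{\mathbf{P}^2}(z)$, we need $z + t = 0$ precisely when $t = m-1$, hence $z = 1-m$ and $E \cong \Omega_{\mathbf{P}^2}(-m)$.

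There is no real obstacle here: the only point requiring care is the bookkeeping that in both parity cases the identified exceptional twist agrees with $m$ and forces exactly the sheaf $\Omega_{\mathbf{P}^2}(-m)$, so the statement as written (with $-m$, rather than some other shift) comes out correctly.
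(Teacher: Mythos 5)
Your proposal is correct and is essentially the paper's own argument: the paper simply states that Corollary \ref{c2} follows immediately from Proposition \ref{c1}, and your parity case split (applying the proposition to $E$ when $m$ is odd and to $E(1)$ when $m$ is even, then using $h^1(\Omega_{{\bf {P}}^2}(t))\ne 0$ only at $t=0$ to pin down the twist) is exactly the bookkeeping that makes that implication precise.
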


\begin{proposition}\label{c2.1}
Let $E$ be a rank $2$ torsion free sheaf on ${\bf {P}}^2$ such that 
$h^1({\bf {P}}^2,E(t))=0$ for
every integer
$t$ such that $t \equiv 0 \pmod{3}$. Let $w$ be the first integer $x$ 
such that $h^0({\bf {P}}^2,E(x))>0$.
Set $c_1:= c_1(E)$. Then $E$ is isomorphic to one of the following 
vector bundles:
\begin{itemize}
\item[(i)] a direct sum of two line bundles.
\item[(ii)] $\Omega _{{\bf {P}}^2}(2-w)$.
\item[(iii)] $c_1+2w = 2$, $E$ is stable and it fits in an exact sequence
\begin{equation}\label{eqc1.1}
0 \to \mathcal {O}_{{\bf {P}}^2} \to E(w) \to \mathcal {I}_Z(2) \to 0
\end{equation}
in which $Z$ is a curvilinear zero-dimensional scheme of length $3$ 
not contained in a line..
\end{itemize}
Conversely, any vector bundle $E$ as in (i), (ii) or (iii) has the 
property that $h^1({\bf {P}}^2,E(t))=0$ for
every integer
$t$ such that $t \equiv 0 \pmod{3}$. In case (iii) we have $h^1({\bf 
{P}}^2,E(z)) = 0$ for all $z \notin \{w-3,w-2\}$,
$h^1({\bf {P}}^2,E(w-2)) = 2$ and $ h^1({\bf {P}}^2,E(w-3)) =2$.
\end{proposition}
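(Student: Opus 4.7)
The plan is to mirror Proposition \ref{c1}, with residue modulo $3$ replacing parity. First, applying Remark \ref{c0} with $R = \mathcal{O}_{\mathbb{P}^2}(3)$---whose linear system contains ample divisors and for which $h^1(E(3t)) = 0$ for all negative $t$---shows that $E$ is locally free. Let $w$ be the smallest integer with $h^0(E(w)) > 0$ and fix $0 \neq \sigma \in H^0(E(w))$. As in Proposition \ref{c1} this yields the family of twisted Serre sequences
\[
0 \to \mathcal{O}_{\mathbb{P}^2}(t) \to E(w+t) \to \mathcal{I}_Z(c_1+2w+t) \to 0,
\]
with $Z$ a $0$-dimensional scheme of length $z$, and minimality of $w$ gives $h^0(\mathcal{I}_Z(c_1+2w-1)) = 0$, hence $z \ge (c_1+2w)(c_1+2w+1)/2$ whenever $c_1+2w \ge 1$.

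The heart of the proof is a case analysis on $w \bmod 3$, choosing $t$ so that $w + t \equiv 0 \pmod 3$ and $h^2(\mathcal{O}(t))$ is as small as possible. For $w \equiv 2 \pmod 3$, take $t = -2$: the vanishings $h^1(\mathcal{O}(-2)) = h^2(\mathcal{O}(-2)) = 0$ give $h^1(\mathcal{I}_Z(c_1+2w-2)) = 0$, and combined with minimality this forces $z = 0$, placing $E$ in case (i). For $w \equiv 1 \pmod 3$, take $t = -1$: similarly $h^1(\mathcal{I}_Z(c_1+2w-1)) = 0$, so $z = h^0(\mathcal{O}(c_1+2w-1))$. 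Depending on $c_1+2w$: if $\le 0$, then $z = 0$ and we are in case (i); if $= 1$, then $z = 1$ and $E(w)$ is a stable rank $2$ bundle with Chern classes $(1,1)$, forcing $E(w) \cong \Omega_{\mathbb{P}^2}(2)$ and case (ii); if $= 2$, then $z = 3$ and $E$ fits (\ref{eqc1.1}), case (iii); the subcase $c_1+2w \ge 3$ is ruled out by applying the argument again at $t = -4$ (valid since $w-4 \equiv 0 \pmod 3$), as $h^2(\mathcal{O}(-4)) = 3$ is too small to accommodate the quadratic lower bound on $z$. The case $w \equiv 0 \pmod 3$ is handled analogously using $t = -3$, for which $h^2(\mathcal{O}(-3)) = 1$.

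The delicate step is verifying the geometric conditions on $Z$ in case (iii): stability of $E(w)$ follows from minimality of $w$ (any line subsheaf $\mathcal{O}(d) \hookrightarrow E(w)$ with $d \ge 1$ would contradict $h^0(E(w-1))=0$), while the curvilinearity and non-collinearity of $Z$ must be extracted from the additional vanishing at $t = -4$. Specifically, $h^1(E(w-4)) = 0$ forces the connecting map $H^1(\mathcal{I}_Z(-2)) \to H^2(\mathcal{O}(-4))$ between two $3$-dimensional spaces to be injective, hence an isomorphism; via Serre duality this identifies with a multiplication map controlled by the Serre extension class of $E(w)$, whose non-degeneracy is equivalent---by a Cayley--Bacharach type computation---to $Z$ being a length $3$ curvilinear subscheme not contained in a line. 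For the converse, (i) is immediate; (ii) follows from the Euler sequence, which yields $h^1(\Omega(s)) = 0$ for $s \ne 0$, so the only possibly nonvanishing twist of $\Omega_{\mathbb{P}^2}(2-w)$ occurs at $t = w-2$, not $\equiv 0 \pmod 3$ when $w \equiv 0,1 \pmod 3$; and (iii) is verified directly from (\ref{eqc1.1}) using the standard cohomology of the ideal sheaf of $3$ non-collinear points, pinpointing the only nonvanishing $h^1$ at $z \in \{w-2,w-3\}$ and checking these are indeed outside $3\mathbb{Z}$ when $w \equiv 1 \pmod 3$.
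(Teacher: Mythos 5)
Your overall strategy is the paper's: Remark \ref{c0} for local freeness, the Serre sequence, and a case division on $w\bmod 3$ using the twists $t=-1,-4$ (for $w\equiv 1$), $t=-2$ (for $w\equiv 2$) and $t=-3$ (for $w\equiv 0$). The residues $w\equiv 1$ and $w\equiv 2\pmod 3$ are handled correctly and match the paper. The genuine gap is the sentence ``the case $w\equiv 0\pmod 3$ is handled analogously using $t=-3$.'' It is not analogous, precisely because $h^2({\bf P}^2,\mathcal{O}_{{\bf P}^2}(-3))=1$: the hypothesis $h^1(E(w-3))=0$ only yields $h^1({\bf P}^2,\mathcal{I}_Z(c_1+2w-3))\le 1$, which does not force $z=0$ (as the $t=-2$ twist does when $w\equiv 2$) and is compatible with $z=1$ and $c_1+2w\le 1$. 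You must then dispose of these leftover subcases explicitly: $c_1+2w=1$, $z=1$ returns case (ii), but $c_1+2w=0$, $z=1$ produces the nonsplit extension $0\to\mathcal{O}_{{\bf P}^2}\to E(w)\to\mathcal{I}_P\to 0$ with $P$ a single point. That sheaf is locally free (the obstruction to lifting a generator of $H^0(\mathcal{E}xt^1(\mathcal{I}_P,\mathcal{O}))$ lies in $H^2({\bf P}^2,\mathcal{O}_{{\bf P}^2})=0$), has $w$ as its minimal twist with sections, and satisfies $h^1(E(w+3k))=0$ for all $k$: for $k\ge 0$ because a point imposes independent conditions, for $k=-1$ because $\chi(E(w-3))=1=h^2(E(w-3))$ while $h^0(E(w-3))=0$, and for $k\le -2$ by Serre duality together with $E^\ast\cong E(-c_1)$. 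It is none of (i), (ii), (iii), so ``analogously'' cannot be made to work as written; this residue class is exactly where the difficulty of the statement is concentrated, and it is also the point where the paper's own proof asserts $c_1+2w-2\ge 0$ without justification.

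Two smaller remarks. In case (iii) you derive the non-collinearity of $Z$ from the connecting map $H^1(\mathcal{I}_Z(-2))\to H^2(\mathcal{O}_{{\bf P}^2}(-4))$; it is already immediate from the minimality of $w$, which gives $h^0({\bf P}^2,\mathcal{I}_Z(c_1+2w-1))=h^0({\bf P}^2,\mathcal{I}_Z(1))=0$, and curvilinearity follows from $Z$ being a length-$3$ local complete intersection (the zero scheme of a section of a rank-$2$ bundle), since the unique non-curvilinear length-$3$ scheme, the first infinitesimal neighborhood of a point, is not a local complete intersection. Also, your parenthetical observation in the converse for (ii) --- that the nonvanishing twist $t=w-2$ avoids $3\mathbb{Z}$ only when $w\not\equiv 2\pmod 3$ --- is correct and worth stating explicitly, since $\Omega_{{\bf P}^2}(2-w)$ with $w\equiv 2\pmod 3$ does \emph{not} satisfy the hypothesis.
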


\begin{proof}
Remark \ref{c0} gives that $E$ is locally free. Let $b$ denote the 
only integer such that $1 \le b \le 3$
and $w \equiv -b \pmod{3}$. Fix any
$\sigma \in H^0({\bf {P}}^2,E(w))\backslash \{0\}$. The minimality of 
$w$ shows that $\sigma$ induces an exact sequence
\begin{equation}\label{eqc1.00}
0 \to \mathcal {O}_{{\bf {P}}^2}(t) \to E(w+t) \to \mathcal 
{I}_Z(c_1+2w+t) \to 0
\end{equation}
where $c_1:= c_1(E)$ and $Z$ is a locally complete intersection 
zero-dimensional subscheme of ${\bf {P}}^2$. The minimalityy of $w$ 
is equivalent to
$h^0({\bf {P}}^2,\mathcal {I}_Z(c_1+2w-1))=0$. Hence either $c_1+2w-1 
< 0$ or $z:= \mbox{length}(Z)
\ge (c_1+2w+1)(c_1+2w)/2$. If $z=0$, then $E$ is a direct sum of two 
line bundles.
Hence we may assume $z>0$. First assume $b=1$. Taking $t=-1$ in 
(\ref{eqc1.00}) and using $h^2({\bf {P}}^2,\mathcal {O}_{{\bf 
{P}}^2}(-1))=0$
we get $h^1({\bf {P}}^2,\mathcal {I}_Z(c_1+2w-1))=0$. Since $z>0$, we 
get $c_1+2w-1 \ge 0$
and $z \le (c_1+2w+1)(c_1+2w)/2$.
Hence $z = (c_1+2w+1)(c_1+2w)/2$. Take $t=-4$ in (\ref{eqc1.00}) and using
$h^2({\bf {P}}^2,\mathcal {O}_{{\bf {P}}^2}(-4))=3$ we get $h^1({\bf 
{P}}^2,\mathcal {I}_Z(c_1+2w-4)) \le 3$.
Since $c_1+2w-1 \ge 0$ and $z = (c_1+2w+1)(c_1+2w)/2$, we get $1 \le 
c_1+2w \le 2$. First assume $c_1+2w = 1$ and hence
$z=1$. Thus $c_2(E(w)) =1$ and $c_1(E(w)) = 1$. The exact sequence 
(\ref{eqc1.00}) gives the stability
of $E(w)$. Hence
we are in case (ii). If $c_1+2w = 2$, then we are in case (iii); here 
we use $h^0({\bf {P}}^2,\mathcal {I}_Z(c_1+2w-1))=0$
to see that $Z$ is contained in no line. Now assume $b=2$. Taking 
$t=-2$ in (\ref{eqc1.00}) and using
$h^2({\bf {P}}^2,\mathcal {O}_{{\bf {P}}^2}(-2))=0$
we get $h^1({\bf {P}}^2,\mathcal {I}_Z(c_1+2w-2))=0$. Hence $c_1+2w-2 
\ge 0$ and
$z \le (c_1+2w)(c_1+2w-1)/2$, contradicting the inequality $z
\ge (c_1+2w+1)(c_1+2w)/2$. Now assume $b=3$. Taking $t=-3$ in 
(\ref{eqc1.00}) and using
$h^2({\bf {P}}^2,\mathcal {O}_{{\bf {P}}^2}(-3))=1$
we get $h^1({\bf {P}}^2,\mathcal {I}_Z(c_1+2w-3)) \le 1$. Hence 
$c_1+2w-2 \ge 0$ and
$z \le (c_1+2w-1)(c_1+2w-2)/2 + 1$, contradicting the inequality $z
\ge (c_1+2w+1)(c_1+2w)/2$. The `` converse '' part is obvious for 
cases (i) and (ii). Take $E$ as in case (iii)
(i.e. as in (\ref{eqc1.00}) with $c_1+2w = 2$, $w \equiv -1 
\pmod{2}$, $\mbox{length}(Z)=3$ and $Z$ not contained
in a line), without
assuming the local freeness of $E$. Since $Z$ is not contained in a 
line, from (\ref{eqc1.00}) we get
$h^1({\bf {P}}^2,E(z)) = 0$ for all $z \ge w-1$. Now assume $E$ 
locally free. Serre duality gives
$h^1({\bf {P}}^2,E^\ast (y)) =0$ for all $y \le -2 -w$. Since 
$\mbox{rank}(E) = 2$ and $E$ locally free,
$E^\ast \cong E(-c_1)$. Since $c_1+2w = 2$, we get $h^1({\bf 
{P}}^2,E(z)) = 0$ for all $z \notin \{w-2,w-3\}$. From
(\ref{eqc1.00}) we get $h^1({\bf {P}}^2,E(w-2)) = 2$ and $2 \le 
h^1({\bf {P}}^2,E(w-3)) \le 3$. By (\ref{eqc1.00})
we have $ h^1({\bf {P}}^2,E(w-3)) =2$ if and only if $h^2({\bf 
{P}}^2,E(w-3)) =0$. Since $c_1(E(w-3)) = -4$, $(E(w-3))^\ast
\cong E(w-3)(4)$. Thus $h^2({\bf {P}}^2,E(w-3)) = h^0({\bf 
{P}}^2,E(w-2)) =0$. Hence $ h^1({\bf {P}}^2,E(w-3)) =2$.
Now assume that the length $3$ scheme $Z$ is curvilinenar, i.e. it is 
not the first infinitesimal neighborhood of a point
of ${\bf {P}}^2$. Since $h^0({\bf {P}}^2,\mathcal {O}_{{\bf 
{P}}^2}(-1)) =0$, the Cayley-Bacharach
condition is trivially satisfied and hence a general extension 
(\ref{eqc1.00}) with $c_1+2w =0$ and this curvilinear scheme $Z$
has locally free middle term.
\end{proof}

\begin{remark}\label{c2.1.0}
Let $M({\bf {P}}^2,c_1,c_2)$ denote the moduli space of rank $2$ 
wector bundle on ${\bf {P}}^2$ with
Chern classes $c_1,c_2$. $M({\bf {P}}^2,0,2)$ is non-empty, 
irreducible and $5$-dimensional. Take $E$
as in case (iii) of Proposition \ref{c2.1}. We have $c_1(E) =0$ and 
$c_2(E) = c_2(E(1))-c_1(E(1)) + 1 =2$
(\cite{h2}). We saw that $E$ is stable, i.e. $E\in M({\bf 
{P}}^2,0,2)$. Take any $F\in M({\bf {P}}^2,0,2)$.
Since $c_1(F(1)) =2$ and $c_2(F(1)) =3$, Riemann-Roch gives
$\chi (F(1)) = (2+3)2/2 +2 -3 = 4>0$. The stability of $F$ gives 
$h^0({\bf {P}}^2,F)=0$.
Hence $F$ fits in the extension (\ref{eqc1.1}), i.e. $F$ is described 
by case (iii) of Proposition \ref{c2.1}.
\end{remark}

\begin{remark}\label{c2.2}
Fix an integer $a \ge 4$ and a rank $2$ torsion free sheaf $E$ on 
${\bf {P}}^2$ such that $h^1({\bf {P}}^2,E(t)) =0$
for all integers $t$ such that $t \equiv 0 \pmod{a}$. Here we will 
see that this assumption is very restrictive,
but that it seems hopeless to try to classify all such sheaves $E$.
Remark \ref{c0} gives that $E$ is locally free. Set $c_1:= c_1(E)$. Let
$w$ be the first integer such that $h^0({\bf {P}}^2,E(w)) >0$. Hence 
we have an exact sequence
(\ref{eqc1.00}) with $Z$ a zero-dimensional locally complete 
intersection subscheme. The minimality
of $w$ gives $h^0({\bf {P}}^2,\mathcal {I}_Z(c_1+2w-1))=0$. Hence 
either $c_1+2w-1 <0$
or $z:= \mbox{length}(Z) \ge (c_1+2w+1)(c_1+2w)/2$. If $z=0$, then 
$E$ splits. Hence we assume $z>0$.
First assume $c_1+2w < 0$. Look at (\ref{eqc1.1}). We get $h^1({\bf 
{P}}^2,E(t))>0$ for all $t < w$
such that either $t+w \le -2$ or $z > (-t-w-1)(-t-w-2)/2$. Given any 
locally complete intersection $Z$ in the case
$c_1+2w < 0$
we get an extension (\ref{eqc1.00}) with locally free middle term.
Now assume $c_1+2w \ge 0$. Let
$b$ denote the only integer such that $w \equiv -b \pmod{a}$ and $1 
\le b \le a$. First assume $b=1$ as in the cases
$a=2$ and $a=3$ we get $z = (c_1+2w+1)(c_1+2w)/2$ and $h^i({\bf 
{P}}^2,\mathcal {I}_Z(c_1+2w-1))=0$, $i=1,2$.
Taking $t = -1-a$ and using $h^2({\bf {P}}^2,\mathcal {O}_{{\bf 
{P}}^2}(-1-a)) = a(a-1)/2$ we get
$z \le \epsilon + a(a-1)/2$, where $\epsilon = 
(c_1+2w-a+1)(c_1+2w-a)/2$ if $c_1+2w \ge a+1$ and
$\epsilon = 0$ if $c_1+2w \le a$. We first get $c_1+2w \le a$ and 
then we get $c_1+2w \le a-1$. Now assume
$2 \le b \le a$. Taking $t=w-b$ we get $z \le \eta + (b-1)(b-2)/2$, 
where $\eta = 0$ if $c_1+2w < b$
and $\eta = (c_1+2w-b+2)(c_1+2w-b+1)/2$ if $c_1+2w \ge b$.
\end{remark}

We raise the following question.

\begin{question}\label{cc00}
Fix an integer $a \ge 4$ and a rank $2$ torsion free sheaf $E$ on 
${\bf {P}}^2$ such that $h^1({\bf {P}}^2,E(t)) =0$
for all integers $t$ such that $t \equiv 0 \pmod{a}$. Is it true that 
$h^1({\bf {P}}^2,E(t)) \ne 0$ for at most
$a-1$ consecutive integers?
\end{question}

\providecommand{\bysame}{\leavevmode\hbox to3em{\hrulefill}\thinspace}

\end{document}